\documentclass[12pt,leqno]{amsart}

\usepackage[a4paper,margin=25mm]{geometry}
\usepackage{amsmath,amssymb}

\newtheorem{theorem}{Theorem}
\newtheorem{corollary}[theorem]{Corollary}
\theoremstyle{remark}
\newtheorem*{remark}{Remark}

\title{Kobayashi visibility of strictly convex domains}
\author{Nikolai Nikolov and Amar Deep Sarkar}

\address{NN: Institute of Mathematics and Informatics, Bulgarian Academy
of Sciences, Acad. G. Bonchev 8, 1113 Sofia, Bulgaria
\vspace{1mm}
\newline Faculty of Information Sciences,
State University of Library Studies and Information Technologies,
Shipchenski prohod 69A, 1574 Sofia, Bulgaria}
\email{nik@math.bas.bg}

\address{ADS: Department of Mathematics,
Indian Institute of Technology Bhubaneswar, Argul 752050, India}
\email{amar@iitbbs.ac.in}

\thanks{The first-named author was partially supported by the Bulgarian National Science
Fund, Ministry of Education and Science of Bulgaria under contract KP-06-N82/6.}

\subjclass[2020]{32F45}
\keywords{(proper) geodesic metric space, visibility, (strictly) convex domain,
Kobayashi distance}

\begin{document}

\begin{abstract} We show that any bounded strictly convex domain in $\Bbb C^N$,
endowed with the Kobayashi distance, is visible.
\end{abstract}

\maketitle

Let $D\subset\Bbb R^n$ be a bounded domain and let $(D,d)$ be a geodesic metric space.
Following \cite{BNT}, we say that $(D,d)$ is \emph{visible} if, for any two distinct
points $p,q\in\partial D$, there exist neighborhoods $U$ of $p$ and $V$ of $q$ and a 
compact set $K\subset D$ such that every geodesic joining a point of $U\cap D$ to a 
point of $V\cap D$ intersects $K$.

\begin{theorem}\label{1}
Let $D\subset\Bbb R^n$ be a bounded strictly convex domain. Assume that $(D,d)$ is a proper
geodesic metric space, $d$ induces the Euclidean topology on $D$, and the $d$-balls are
convex in the Euclidean sense. Then $(D,d)$ is visible.
\end{theorem}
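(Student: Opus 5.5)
We argue by contradiction. If $(D,d)$ is not visible, there are distinct $p,q\in\partial D$, sequences $x_k\to p$, $y_k\to q$ in $D$, and $d$-geodesics $\gamma_k$ from $x_k$ to $y_k$ that eventually leave every compact subset of $D$. Properness of $d$ means closed $d$-balls are compact, and since $d$ induces the Euclidean topology, ``leaving every compact set'' means $\max_{z\in\gamma_k}\operatorname{dist}(z,\partial D)\to 0$. After passing to a subsequence, the compact sets $\gamma_k([0,\ell_k])$ converge in the Hausdorff sense to a compact connected set $\Gamma\subset\partial D$ containing $p$ and $q$. Strict convexity of $D$ says $\partial D$ contains no nontrivial segment. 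The aim is to use convexity of $d$-balls to force $\Gamma$, or part of it, onto the segment $[p,q]$, and then derive a contradiction.

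The key step uses convexity of balls along the geodesic. Fix $k$ and a point $z=\gamma_k(t)$. Put $a=d(x_k,z)$ and $b=d(z,y_k)$, so $a+b=d(x_k,y_k)$. Consider the closed balls $\overline B(x_k,a)$ and $\overline B(y_k,b)$. Both are compact and Euclidean convex, both contain $z$, and their interiors are disjoint. Indeed, if some $w$ had $d(x_k,w)<a$ and $d(w,y_k)<b$, the triangle inequality would give $d(x_k,y_k)<a+b$, which is false. Two convex sets with disjoint interiors are separated by a real hyperplane $H_z$ through $z$, with $x_k$ on one closed side and $y_k$ on the other. I would also use the nested family of sublevel sets $\{w: d(x_k,w)\le s\}$, $s\in[0,\ell_k]$, which are convex and increase along the geodesic. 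In the limit these should give a continuum of supporting-type hyperplanes. The plan is to pass to the limit and show that at suitable limit points $z_\infty\in\Gamma$, the limiting hyperplane $H$ separates $p$ from $q$ and passes through $z_\infty\in\partial D$.

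Next I would choose which limit point to use. I would take $t_k$ so that $\gamma_k(t_k)$ lies on a fixed Euclidean hyperplane $L$ strictly between $p$ and $q$, for instance the perpendicular bisector of $[p,q]$. Such a $t_k$ exists by connectedness, because $x_k$ and $y_k$ end up on opposite sides of $L$. The points $\gamma_k(t_k)$ converge to some $z_\infty\in\partial D\cap L$, and the separating hyperplanes converge to a hyperplane $H$ through $z_\infty$ that weakly separates $p$ from $q$.

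The main obstacle is turning this into a contradiction, because a hyperplane through $z_\infty$ separating $p$ and $q$ does not by itself contradict strict convexity. What I would try first is to exploit the fact that the balls $\overline B(x_k,a_k)$ are large: they contain the whole initial arc of $\gamma_k$, which approaches the boundary near $p$. By convexity, each such ball contains the convex hull of that arc. If this hull together with $z$ contained points deep inside $D$ near the chord $[p,z_\infty]$, then the ball about $x_k$ and the ball about $y_k$ would overlap in interior points. The overlap would be open, being the intersection of the open balls of radii slightly larger. Here I would use that $d$ is continuous in the Euclidean topology to get openness, and then contradict disjointness of the interiors. Concretely, I would show that the convex hull of $\gamma_k[0,t_k]$ converges to a set containing $\operatorname{conv}(\Gamma_1)$, where $\Gamma_1$ is the limit arc from $p$ to $z_\infty$, and similarly for the other half with $\Gamma_2$. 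Both hulls contain $z_\infty$. Strict convexity makes $\operatorname{conv}(\{p,z_\infty\})$ meet $D$, so each hull has nonempty interior meeting $D$ near $z_\infty$ from two sides. The delicate point, which I expect to be the hardest, is showing that these two hulls cannot be separated by $H$. I would handle it by checking that $H$ would have to support $D$ at $z_\infty$ while containing the interior chords $[p,z_\infty]$ and $[z_\infty,q]$ on opposite closed sides. Then $p$, $z_\infty$ and $q$ would all lie in $H\cap\overline D$, which is a single point by strict convexity. This is impossible since $p\neq q$.
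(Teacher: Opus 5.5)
There is a genuine gap at the last step. Everything up to the limiting hyperplane is fine: the negation of visibility, the Hausdorff limit, the choice $z_k=\gamma_k(t_k)\in L$, the disjointness of the open balls $B(x_k,a_k)$ and $B(y_k,b_k)$, and the separating hyperplane $H_k$ through $z_k$. But the configuration you end up with is not contradictory. A hyperplane $H$ through $z_\infty\in\partial D$ with $[p,z_\infty]$ and $[z_\infty,q]$ on opposite closed sides exists for every strictly convex domain. For instance, take $D$ the unit disc, $p=-1$, $q=1$, $z_\infty=i$ and $H=L$ the imaginary axis; the convex hulls of the two upper quarter-circles are separated by $L$.

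Your claim that $H$ must support $D$ at $z_\infty$ has no justification, and it is in fact false in your own situation. The open chords $(p,z_\infty)$ and $(z_\infty,q)$ lie in $D$ and on opposite closed sides of $H$, whereas a supporting hyperplane would put all of $D$ in one open half-space. (Also, $\mathrm{conv}(\Gamma_1)$ need not have interior when $n\ge 3$.) The underlying problem is that the only metric information you extract is $d(x_k,w^k_1)\le a_k$ and $d(y_k,w^k_2)\le b_k$, for points $w^k_i$ tending to interior points of the two chords. This is perfectly compatible with $a_k+b_k=d(x_k,y_k)\le d(x_k,w^k_1)+d(w^k_1,w^k_2)+d(w^k_2,y_k)$. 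Nothing quantitative in your argument keeps the balls away from the interior of $D$.

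The missing idea is a horoball-type estimate. You need points $u_k$ on the geodesic with $u_k\to r\neq p$ and $d(x_k,u_k)-d(x_k,o)\to-\infty$ for a fixed $o\in D$, or the symmetric statement at $y_k$. Then $\overline B(x_k,d(x_k,u_k))$ contains $[x_k,u_k]$ by convexity. On the other hand, any $w^k$ in this ball with $w^k\to w\in D$ would give $d(x_k,o)\le d(x_k,u_k)+d(w^k,o)$, which is impossible. Hence $(p,r)\cap D=\emptyset$, so $[p,r]\subset\partial D$, contradicting strict convexity.

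The paper obtains such points from the Gromov product. If $(x_j|y_j)_o\to\infty$, take the point $z_j$ on the geodesic with $d(x_j,z_j)=d(o,x_j)-(x_j|y_j)_o$. It satisfies $d(x_j,z_j)-d(x_j,o)=d(y_j,z_j)-d(y_j,o)=-(x_j|y_j)_o\to-\infty$ and $d(o,z_j)\to\infty$, and one of $p,q$ differs from the limit $r$ of $z_j$.

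Your point on the bisector can be kept, to pass from escaping geodesics to this setting. For $w$ on a geodesic from $x$ to $y$ one has $(x|w)_o+(w|y)_o=(x|y)_o+d(w,o)$. So $d(z_k,o)\to\infty$ forces, after passing to a subsequence, say $(x_k|z_k)_o\to\infty$, with $z_k\to z_\infty\neq p$. The paper's argument applied to the subarc from $x_k$ to $z_k$ then finishes the proof. The separating hyperplanes are not needed at all.
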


It is well-known that any bounded convex domain $D\subset\Bbb C^N$, endowed with the Kobayashi
distance $k_D$, satisfies these assumptions; see, e.g., \cite{A}. By \cite[Theorem~4.1]{NOT},
visibility of $k_D$ has a local character for bounded domains. Hence we obtain the following.

\begin{corollary}\label{2} Let $D\subset\Bbb C^N$ be a bounded domain which is locally strictly
convexifiable near any boundary point. Then $(D,k_D)$ is visible.
\end{corollary}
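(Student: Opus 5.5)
The plan is to deduce Corollary~\ref{2} from Theorem~\ref{1} by localization. We use the localization principle \cite[Theorem~4.1]{NOT} in the following form (this is our reading of it): if $D$ is a bounded domain and every $p\in\partial D$ has an open neighborhood $U$ such that $D\cap U$ is connected and $(D\cap U,k_{D\cap U})$ is visible, then $(D,k_D)$ is visible. So it suffices to produce, for each $p\in\partial D$, such a neighborhood $U$ for which $D\cap U$ is biholomorphic to a bounded strictly convex domain. We never use convexity of $k_D$-balls on $D$ itself, which may fail. We only use it on the strictly convex model, where \cite{A} applies.

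\textbf{Choosing the local model.} Fix $p\in\partial D$. Local strict convexifiability gives a neighborhood $U_0$ of $p$ and a biholomorphism $\Phi$ from $U_0$ onto an open set $\Phi(U_0)\subset\Bbb C^N$ with the following property: near $\Phi(p)$, the set $\Phi(D\cap U_0)$ is strictly convex. More precisely, there is a Euclidean ball $B_0$ centered at $\Phi(p)$ with
\[
\Phi(D\cap U_0)\cap B_0=\{\rho<0\}\cap B_0 ,
\]
where the portion of $\partial\Phi(D\cap U_0)$ inside $B_0$ is strictly convex. Take a smaller ball $B=B(\Phi(p),r)$ with $\overline B\subset B_0\cap\Phi(U_0)$. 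Then $G:=\Phi(D\cap U_0)\cap B$ is the intersection of a convex set with a ball, hence convex. It is also bounded. It is strictly convex, since segments between boundary points meet the boundary only at their endpoints: both the strictly convex piece and the sphere $\partial B$ are strictly convex, and a segment joining points of the two pieces enters the interior. Set $U:=\Phi^{-1}(B)$. Then $D\cap U=\Phi^{-1}(G)$ is connected, since $G$ is convex.

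\textbf{Applying Theorem~\ref{1}.} Because $G$ is a bounded convex domain in $\Bbb C^N$, the space $(G,k_G)$ is a proper geodesic metric space, $k_G$ induces the Euclidean topology, and the $k_G$-balls are convex \cite{A}. By Theorem~\ref{1}, $(G,k_G)$ is visible. The map $\Phi$ is a biholomorphism from $D\cap U$ onto $G$, so it is an isometry $(D\cap U,k_{D\cap U})\to(G,k_G)$ and sends geodesics to geodesics. Since $\Phi$ is a homeomorphism on the neighborhood $U_0$ of $\overline{U}$, it maps $\overline{D\cap U}$ homeomorphically onto $\overline G$, and $\partial(D\cap U)$ onto $\partial G$. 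Consequently neighborhoods of distinct boundary points, and compact subsets of $D\cap U$, correspond to the same objects for $G$. Visibility of $G$ therefore transfers to $(D\cap U,k_{D\cap U})$, and \cite[Theorem~4.1]{NOT} gives that $(D,k_D)$ is visible.

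\textbf{Main obstacle.} The delicate point is matching the exact hypotheses of \cite[Theorem~4.1]{NOT}. It may require, for example, visibility only for pairs of boundary points of $D$ near $p$, or a specific shape of the neighborhood $U$. I would first check that the construction above, where $U$ is the preimage of an arbitrarily small ball, satisfies whatever shrinking or connectedness requirement it imposes. The remaining steps are the elementary strict convexity of $G$ and the isometric transfer, both of which are routine.
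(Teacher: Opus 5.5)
Your proposal is correct and takes the same route as the paper, which deduces Corollary~\ref{2} in one line by combining Theorem~\ref{1}, the properties of $k_G$ on bounded convex domains \cite{A}, and the localization principle \cite[Theorem~4.1]{NOT}. Your construction of the local model $G=\Phi(D\cap U_0)\cap B$ (bounded, strictly convex, with $\overline U\subset U_0$ so that boundaries correspond) spells out details the paper leaves implicit. Because you obtain full visibility of a connected $D\cap U$ with $U$ arbitrarily small, your argument fits any reasonable reading of the hypotheses of that localization theorem.
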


Strictness is essential, as the unit polydisc shows.

Corollary \ref{2} complements \cite[Theorem~1.3]{BNT}, where visibility is proved for bounded
convex domains with Dini-smooth boundary under a local $\Bbb C$-strict convexity assumption.

\begin{proof}
Fix $o\in D$ and set
$$(x|y)_o=\frac12\bigl(d(x,o)+d(y,o)-d(x,y)\bigr),\qquad x,y\in D.$$
Although \cite[Proposition~2.5]{BNT} is stated for $k_D$ ($D\subset\Bbb C^N)$, its proof uses
only compactness of closed bounded balls and the agreement of the metric and Euclidean
topologies. Therefore $(D,d)$ is visible if and only if for any two distinct points
$p,q\in\partial D,$
$$\limsup_{(x,y)\to(p,q)}(x|y)_o<\infty.$$

Suppose that $(D,d)$ is not visible. Then there exist two distinct points
$p,q\in\partial D$ and sequences $x_j\to p$, $y_j\to q$ such that
$(x_j|y_j)_o\to+\infty.$ Let $\sigma_j$ be a $d$-geodesic from $x_j$ to $y_j$.
Because $0\le d(o,x_j)-(x_j|y_j)_o\le d(x_j,y_j)$, we may choose $z_j\in\sigma_j$
so that $d(x_j,z_j)=d(o,x_j)-(x_j|y_j)_o.$ By boundedness of $D$, after passing to a subsequence,
$z_j\to r\in\overline D$. Since $d(o,z_j)\ge (x_j|y_j)_o\to+\infty$, the agreement of the two
topologies implies $r\in\partial D$. As $p\neq q$, we may assume that $p\neq r$. Since
$d(x_j,z_j)-d(x_j,o)=-(x_j|y_j)_o\to-\infty$ and the $d$-balls are convex,
\cite[Proposition~4.3]{HW} yields $[r,p]\subset\partial D$, a contradiction to the strict
convexity of $D$.
\end{proof}

\begin{remark} Let $D\subset\Bbb C^N$ be a bounded strictly convex domain. By \cite[Theorem~3]{V}, 
all $k_D$-balls are strictly convex. A midpoint argument then shows that a real $k_D$-geodesic is 
uniquely determined by its endpoints. Consequently, the complex geodesic through any two distinct
points of $D$ is unique up to precomposition with an automorphism of the unit disc $\Bbb D$ (such 
geodesics exist by Lempert's theorem). Recall that a complex geodesic is a holomorphic isometric
embedding $\varphi:(\Bbb D,k_{\Bbb D})\rightarrow(D,k_D)$.

In this direction, note that combining Corollary \ref{2} with \cite[Proposition~4 and Corollary~6]{N} 
gives the following: every complex geodesic of $D$ extends continuously to $\overline{\Bbb D}$ and 
maps $\partial\Bbb D$ into $\partial D$; for any two distinct point $p,q\in\overline D$, there 
exists a complex geodesic $\varphi$ such that $p,q\in\varphi(\overline{\Bbb D})$.
\end{remark}

\end{document}